\newcommand{\abs}[1]{\left\lvert{#1}\right\rvert}
\newcommand{\norm}[1]{\left\|{#1}\right\|}
\newcommand{\R}{\mathbb{R}}\newcommand{\N}{\mathbb{N}}
\newcommand{\A}{\mathcal{A}}
 \newcommand{\ie}{i.e.\ }
\newtheorem*{mainthm}{Theorem A} 
\newtheorem{theorem}{Theorem}[section]
\newtheorem{lemma}[theorem]{Lemma}
\theoremstyle{definition}
\theoremstyle{remark} 
\newcommand{\Dis}[1]{\mathcal{D}^\prime_{#1}}
 \newcommand{\dd}{\:\mathrm{d}}
\DeclareMathOperator{\M}{\mathfrak{M}}
\DeclareMathOperator{\Bs}{\mathcal{S}}
\DeclareMathOperator{\Lf}{\mathcal{L}_\mathit{f}}
\begin{document}

\title{On cohomological $C^0$-(in)stability}

\author{Alejandro Kocsard} \address{Instituto de Matem\'atica e
  Estat\'\i stica, Universidade Federal Fluminense, Rua M\'ario Santos
  Braga, s/n. Niter\'oi, RJ, Brazil.} \email{akocsard@id.uff.br}

\date{\today}

\begin{abstract}
  After Katok\cite{KatokRobinson}, a homeomorphism $f\colon M\to M$ is
  said to be \emph{cohomologically $C^0$-stable} when its space of
  real $C^0$-coboundaries is closed in $C^0(M)$. In this short note we
  completely classify cohomologically $C^0$-stable homeomorphisms,
  showing that periodic homeomorphisms are the only ones.
\end{abstract}

\maketitle

\section{Introduction}
\label{sec:intro}

Cocycles and cohomological equations play a fundamental role in
dynamical systems and ergodic theory. In this short note we shall
mainly concentrate on topological dynamics. So, from now on $(M,d)$
will denote a compact metric space and the dynamics will be given by a
homeomorphism $f\colon M\to M$.

In such a case, a (real) \emph{cocycle over $f$} is just a map
$\phi\colon M\to\R$. If $\A\subset M^\R$ denotes an
\emph{$f$-invariant functional space} (\ie $\A$ is linear subspace of
$M^\R$ such that $\psi\circ f\in\A$ whenever $\psi\in\A$), any
$\phi\in\A$ will be called an \emph{$\A$-cocycle} and we will say
$\phi$ is an \emph{$\A$-coboundary} whenever the cohomological
equation
\begin{displaymath}
  \phi=u\circ f-u
\end{displaymath}
admits a solution $u\in\A$. Many questions in dynamics can be reduced
to determine if certain cocycles are or not coboundaries, so it is an
important problem (and in many cases rather difficult) to study the
structure of the linear space of $\A$-coboundaries, which shall be
denoted by
\begin{displaymath}
  B(f,\A):=\{v\circ f-v : v\in\A\}.
\end{displaymath}

By analogy with cohomological theories, we can define the \emph{first
  cohomology space of $f$} with coefficient in $\A$ as the linear
space
\begin{displaymath}
  H^1(f,\A):=\A/B(f,\A).
\end{displaymath}

To analyze the structure of $H^1(f,\A)$ (and $B(f,\A)$) in general we
endow the space $\A$ with a vector space topology, and hence,
$H^1(f,\A)$ inherits the quotient one. Then, typically the analysis is
divided in two steps (see \cite{KatokRobinson} for a very detailed
exposition):
\begin{enumerate}[(a)]
\item{\textbf{Cohomological $\A$-obstructions:}} very roughly, these
  are necessary conditions an $\A$-cocycle must satisfy to be an
  $\A$-coboundary. In general, these are closed conditions in $\A$, so
  typically they characterize $\overline{B(f,\A)}^\A$ instead of
  $B(f,\A)$. Some examples of cohomological obstructions:
  \begin{enumerate}[(i)]
  \item Invariant measures are the cohomological obstructions for
    solving cohomological equations in the topological category. In
    fact, if $\M(f)$ denotes the space of $f$-invariant probability
    measures, then it holds
    \begin{displaymath}
      \overline{B(f,C^0(M))}^{C^0}=\left\{\phi\in C^0(M) :
        \int_M\phi\dd\mu=0,\ \forall\mu\in\M(f)\right\}.
    \end{displaymath}
  \item Invariant distributions (in the sense of Schwartz) are the
    cohomological obstructions in the smooth category. In fact, if $M$
    is closed smooth manifold and $\Dis{}(M)$ denotes the topological
    dual space of $C^\infty(M)$, then defining
    $\Dis{}(f):=\{\mu\in\Dis{}(M) :
    \langle\mu,\phi\rangle=\langle\mu,\phi\circ f\rangle,\
    \forall\phi\in C^\infty(M)\}$, it holds
    \begin{displaymath}
      \overline{B(f,C^\infty(M))}^{C^\infty}=\left\{\phi\in
        C^\infty(M) : \langle\mu,\phi\rangle=0,\
        \forall\mu\in\Dis{}(f)\right\}.
    \end{displaymath}
  \end{enumerate}
\item \textbf{Cohomological $\A$-stability:} A system $f$ is said to
  be \emph{cohomologically $\A$-stable} when $B(f,\A)$ is closed in
  $\A$. Cohomological stability is a very desirable property because
  in that case, and only in that case, we can verify whether a cocycle
  $\phi$ is an $\A$-coboundary just analyzing the cohomological
  obstructions of item (a). Let us mention some examples:
  \begin{enumerate}[(i)]
  \item\emph{Hyperbolic systems:} After Liv\v sic
    \cite{LivsicCohomDynSys} we know that hyperbolic systems are
    cohomologically H\"older-stable. On the other hand, de la Llave,
    Marco and Moriyon have shown in \cite{delaLlaveMarcoMoriyon} that
    $C^r$ Anosov diffeomorphisms are cohomologically $C^r$-stable, for
    any $r\in [2,\infty]$.
  \item\emph{Ergodic translations on tori:} It is well-known that
    ergodic translations on tori are cohomologically $C^\infty$-rigid
    iff they are Diophantine (see \cite{KatokRobinson} for details). 
  \item\emph{Smooth circle diffeomorphisms with irrational rotation
      number:} In a joint work with Avila
    \cite{AviKocCohomoEqInvDistCirc}, we showed that a
    $C^\infty$-circle diffeomorphism with no periodic points is
    cohomologically $C^\infty$-stable iff its rotation number is
    Diophantine.
  \end{enumerate}
\end{enumerate}

In this short note, we completely characterize the homeomorphisms that
are cohomologically $C^0$-stable. In fact, we prove the following
\begin{mainthm}
  A homeomorphism $f\colon M\to M$ is cohomologically $C^0$-stable if
  and only if $f$ is periodic, \ie it has finite order in the group of
  homeomorphisms of $M$.
\end{mainthm}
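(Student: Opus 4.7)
For the easy direction ($f$ periodic $\Rightarrow$ $C^0$-stable), I would observe that if $f^n=\mathrm{id}$ then the operator $T_n\phi:=\sum_{k=0}^{n-1}\phi\circ f^k$ is bounded on $C^0(M)$, and the identity
\begin{equation*}
  \Big(\sum_{k=0}^{n-1}\tfrac{k}{n}\phi\circ f^k\Big)\circ f - \sum_{k=0}^{n-1}\tfrac{k}{n}\phi\circ f^k = \phi - \tfrac{1}{n}T_n\phi
\end{equation*}
makes $T_n\phi=0$ both necessary and sufficient for $\phi\in B(f,C^0(M))$. Hence $B(f,C^0(M))=\ker T_n$ is closed.

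For the converse I would argue by contrapositive. Let $F:=\{v\in C^0(M):v\circ f=v\}$. The coboundary operator $u\mapsto u\circ f-u$ induces a continuous linear bijection $C^0(M)/F\to B(f,C^0(M))$; if the target is closed, the open mapping theorem yields a constant $C>0$ with
\begin{equation*}
  \inf_{v\in F}\|u-v\|_\infty\leq C\,\|u\circ f-u\|_\infty \qquad \forall u\in C^0(M).
\end{equation*}
My plan is to violate this estimate assuming $f$ is not periodic. Then $f^{(2N+1)!}\neq\mathrm{id}$ for every $N\geq 1$, which produces a point $x_N\in M$ whose period is either infinite or exceeds $2N+1$; in particular the $2N+1$ iterates $f^{-N}(x_N),\ldots,f^N(x_N)$ are pairwise distinct. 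I would then pick a small open neighborhood $U_N\ni x_N$ such that $f^k(U_N)$, $|k|\leq N$, are pairwise disjoint and each meets the orbit of $x_N$ only at $f^k(x_N)$, choose $\psi_N\in C^0(M)$ supported in $U_N$ with $0\leq\psi_N\leq 1$ and $\psi_N(x_N)=1$, and define
\begin{equation*}
  u_N:=\sum_{k=-N}^{N}\Big(1-\tfrac{|k|}{N}\Big)\psi_N\circ f^{-k}.
\end{equation*}

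A short case analysis using the disjointness of the $f^k(U_N)$ shows $\|u_N\circ f-u_N\|_\infty\leq 1/N$: consecutive tent coefficients $\lambda_k:=\max(1-|k|/N,0)$ differ by at most $1/N$, and the differences vanish off $\bigcup_{|k|\leq N}f^k(U_N)$. Simultaneously, every $v\in F$ is constant along the orbit of $x_N$, whereas $u_N(x_N)=1$ and $u_N(f^{\pm N}(x_N))=0$, so $\inf_{v\in F}\|u_N-v\|_\infty\geq 1/2$. Substituting into the open mapping bound yields $1/2\leq C/N$, absurd once $N>2C$.

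The main obstacle I anticipate is the design of $u_N$: its coboundary must be of order $1/N$ \emph{uniformly on $M$} (not just along the orbit of $x_N$), while $u_N$ itself must stay at sup-distance at least $1/2$ from every invariant continuous function. The tent profile $k\mapsto 1-|k|/N$ together with the disjointness of the iterates $f^k(U_N)$ is tailored to secure both features simultaneously: the tent forces the full oscillation $[0,1]$ along the orbit segment, while the disjointness keeps neighboring supports from interacting and spoiling the coboundary estimate.
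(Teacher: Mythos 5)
Your proof is correct and follows essentially the same route as the paper: the explicit primitive $\sum_{k}\frac{k}{n}\,\phi\circ f^{k}$ for the periodic direction, and, for the converse, the open mapping theorem applied to the induced bijection $C^0(M)/F\to B(f,C^0(M))$ combined with tent-shaped functions spread over pairwise disjoint iterates of a small neighborhood of a point of large period. The one condition to adjust is the requirement that each $f^k(U_N)$ meet the orbit of $x_N$ only at $f^k(x_N)$: as literally stated this is unattainable when the orbit is dense (e.g.\ for minimal $f$), but all your argument actually uses is that $U_N$ avoids the finitely many points $f^m(x_N)$ with $0<\abs{m}\leq 2N$ (which are distinct from $x_N$ since its period exceeds $2N+1$), and this weaker, achievable condition already yields $u_N(f^{\pm N}(x_N))=0$ and hence the contradiction $1/2\leq C/N$.
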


\section{Notations}
\label{sec:notations}

As we have already mentioned in the \S~\ref{sec:intro}, $(M,d)$ will
denote an arbitrary compact metric space. Given $x\in M$ and $r>0$, we
write $B(x,r):=\{y\in M : d(x,y)<r\}$. If $A\subset M$, $\chi_A\colon
M\to\{0,1\}$ will denote the characteristic function of $A$. 

We will write $C^0(M)$ for the space of real continuous functions on
$M$ endowed with the uniform norm
\begin{displaymath}
  \norm{\phi}_{C^0}:=\sup_{x\in M}\abs{\phi(x)},\quad\forall \phi\in
  C^0(M). 
\end{displaymath}

Given a homeomorphism $f\colon M\to M$, we define the space of
\emph{$C^0$-coboundaries} by
\begin{displaymath}
  B(f,C^0(M)):=\{v\circ f-v :  v\in C^0(M)\}. 
\end{displaymath}

The homeomorphism $f$ is said to be \emph{cohomologically
  $C^0$-stable} iff $B(f,C^0(M))$ is closed in $C^0(M)$.

On the other hand, $f$ is said to be \emph{periodic} when there exists
$q\in\N$ satisfying $f^q=id_M$, and the number $q$ is called a
\emph{period of $f$.}

\section{Proof of Theorem A}
\label{sec:proof-theorem-A}

Let us start with the simplest part of Theorem $A$, \ie let us prove
that any periodic map is cohomologically $C^0$-stable:

\begin{lemma}
  \label{lem:per-impl-stable}
  Let us assume $f$ is periodic and let $q\in\N$ be a period of
  $f$. Then, 
  \begin{displaymath}
    B(f,C^0(M))=\Bigg\{\phi\in C^0(M) :
    \sum_{j=0}^{q-1}\phi(f^j(x))=0,\ \forall x\in M\Bigg\}. 
  \end{displaymath}
  In particular, $f$ is cohomologically $C^0$-stable.
\end{lemma}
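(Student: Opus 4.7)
The plan is to prove both set inclusions in the displayed equality, and then deduce stability as an immediate corollary of that equality.

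For the inclusion $\subseteq$, I would start with any $\phi=u\circ f-u$ with $u\in C^0(M)$ and evaluate the orbit sum by telescoping: since $f^q=\mathrm{id}_M$,
\begin{displaymath}
  \sum_{j=0}^{q-1}\phi(f^j(x))=\sum_{j=0}^{q-1}\bigl(u(f^{j+1}(x))-u(f^j(x))\bigr)=u(f^q(x))-u(x)=0.
\end{displaymath}
This is the routine half.

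For the reverse inclusion, given $\phi\in C^0(M)$ satisfying the orbit-sum condition, the task is to exhibit an explicit continuous transfer function $u$. The ansatz is to look for $u$ as a linear combination of $\phi\circ f^j$, $0\le j\le q-1$; averaging Birkhoff sums is the natural choice. Concretely I would set
\begin{displaymath}
  u(x):=-\frac{1}{q}\sum_{k=0}^{q-1}\sum_{j=0}^{k-1}\phi(f^j(x)),
\end{displaymath}
which is a finite sum of continuous functions, hence continuous. To verify that $u\circ f-u=\phi$, I would use that $\sum_{j=0}^{k-1}\phi(f^{j+1}(x))-\sum_{j=0}^{k-1}\phi(f^j(x))=\phi(f^k(x))-\phi(x)$, sum over $k=0,\dots,q-1$, and invoke the hypothesis $\sum_{k=0}^{q-1}\phi(f^k(x))=0$ to obtain $u(f(x))-u(x)=\phi(x)$.

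Finally, the cohomological $C^0$-stability follows at once from the characterization: the map $\Phi\colon C^0(M)\to C^0(M)$ defined by $\Phi(\phi)(x):=\sum_{j=0}^{q-1}\phi(f^j(x))$ is linear and continuous (each $\phi\mapsto\phi\circ f^j$ is an isometry of $C^0(M)$), so its kernel is closed, and by the displayed identity this kernel is exactly $B(f,C^0(M))$. No step here looks genuinely obstructive; the only mild subtlety is guessing the correct averaging coefficients for $u$, which is forced once one realizes the Birkhoff-sum-of-Birkhoff-sums formula collapses precisely against the vanishing orbit sum.
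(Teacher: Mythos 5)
Your proposal is correct and follows essentially the same route as the paper: the forward inclusion by telescoping, and the reverse inclusion via the averaged-Birkhoff-sum transfer function (your $u=-\frac{1}{q}\sum_{k=0}^{q-1}\Bs_f^k\phi$ coincides with the paper's $-\frac{1}{q}\sum_{j=1}^{q}\Bs_f^j\phi$, since the extra term $\Bs_f^q\phi$ vanishes by hypothesis), with the same cancellation in the verification. Your explicit closing remark that the right-hand side is the kernel of the continuous operator $\phi\mapsto\sum_{j=0}^{q-1}\phi\circ f^j$ is exactly the (implicit) reason the paper concludes stability.
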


\begin{proof}
  First of all observe that every $\phi\in B(f,C^0(M))$ satisfies
  $\Bs_f^q\phi\equiv 0$. In fact, if $u\colon M\to\R$ is such that
  \begin{displaymath}
    \phi(x)=u(f(x))-u(x),\quad\forall x\in M, 
  \end{displaymath}
  then it clearly holds 
  \begin{displaymath}
    \Bs_f^q\phi(x)=u(f^q(x))-u(x)=0,\quad\forall x\in M.
  \end{displaymath}

  On the other hand, let us suppose $\psi\in C^0(M)$ is such that
  $\Bs_f^q\psi\equiv 0$. Then, using a formula we learned from
  \cite{MoulinPinchonSyst} we write
  \begin{displaymath}
    v(x):=-\frac{1}{q}\sum_{j=1}^q\Bs_f^j\psi(x),\quad \forall
    x\in M.
  \end{displaymath}
  It clearly holds $v\in C^0(M)$, and
  \begin{displaymath}
    \begin{split}
      v(f(x))-v(x)&=-\frac{1}{q}\bigg(\sum_{j=1}^q\big(
      \Bs_f^j\psi(f(x))-\Bs_f^j\psi(x)\big)\bigg) \\
      & = -\frac{1}{q}\bigg(\Bs_f^q\psi(f(x))-q\psi(x)\bigg)=\psi(x),
    \end{split}
  \end{displaymath}
  for every $x\in M$. Thus, $\psi\in B(f,C^0(M))$, as desired.
\end{proof}

\subsection{The cohomological operator}
\label{sec:cohomol-operator}

We can define the \textbf{cohomological operator} (associated to
$f$) $\Lf\colon C^0(M)\to C^0(M)$ by
\begin{displaymath}
  \Lf(u):=u\circ f-u\quad\forall u\in C^0(M).
\end{displaymath}
This is clearly a linear operator, and since
\begin{displaymath}
  \norm{\Lf(u)}_{C^0}\leq 2\norm{u}_{C^0},
\end{displaymath}
it is is also continuous.

Now, observe that the kernel of $\Lf$, which shall be denoted by
$\ker\Lf$, coincides with the space of continuous $f$-invariant
functions. The quotient space $C^0(M)/\ker\Lf$ will be denoted by
$C^0_f(M)$. Defining
\begin{equation}
  \label{eq:norm-C0f-def}
  \norm{\phi+\ker\Lf}_{C^0_f}:=\inf_{\psi\in\ker\Lf}
  \norm{\phi+\psi}_{C^0}, \quad\forall\phi\in C^0(M),
\end{equation}
we clearly get a norm and this turns $C^0_f(M)$ into a Banach space.

On the other hand, notice that the image of the operator $\Lf$
coincides with the space of continuous coboundaries $B(f,C^0(M))$,
which is, by our hypothesis, a closed subspace of $C^0(M)$. We will
consider $B(f,C^0(M))$ equipped with (the restriction of) the norm
$\norm{\cdot}_{C^0}$.

In this way, we have the following simple 
\begin{lemma}
  \label{lem:induced-operat-cont}
  Let $\bar\Lf\colon C^0_f(M)\to B(f,C^0(M))$ be the factor linear
  operator turning the following diagram commutative:
  \begin{displaymath}
    \xymatrix{C^0(M)\ar[rr]^\Lf\ar[dr]^{\pi} & & B(f,C^0(M)) \\
      & C^0_f(M)\ar[ur]^{\bar\Lf} &}
  \end{displaymath}
  where $\pi : \phi\mapsto\phi+\ker\Lf$ denotes the canonical
  quotient projection.

  Then, $\bar\Lf$ is continuous and bijective, and consequently, it is
  a Banach space isomorphism.
\end{lemma}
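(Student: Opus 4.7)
The plan is to verify the three claims about $\bar{\mathcal{L}}_f$ in turn and then invoke Banach's open mapping theorem. The key conceptual observation is that both the domain $C^0_f(M)$ and the codomain $B(f,C^0(M))$ are genuine Banach spaces: the former because $\ker\mathcal{L}_f$ is closed in $C^0(M)$ (being the kernel of the continuous operator $\mathcal{L}_f$), which is exactly what is needed for the quotient norm \eqref{eq:norm-C0f-def} to be complete; the latter because we are working under the standing hypothesis that $f$ is cohomologically $C^0$-stable, so $B(f,C^0(M))$ is a closed subspace of the Banach space $C^0(M)$.

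Well-definedness and linearity of $\bar{\mathcal{L}}_f$ are automatic from the fact that $\mathcal{L}_f$ is a linear operator that vanishes on $\ker\mathcal{L}_f$. For continuity, I would fix $\phi\in C^0(M)$ and note that for every $\psi\in\ker\mathcal{L}_f$ one has
\begin{displaymath}
  \norm{\bar\Lf(\phi+\ker\Lf)}_{C^0} = \norm{\Lf(\phi+\psi)}_{C^0} \leq 2\norm{\phi+\psi}_{C^0},
\end{displaymath}
and then taking the infimum over $\psi$ on the right produces the bound $\norm{\bar\Lf(\phi+\ker\Lf)}_{C^0}\leq 2\norm{\phi+\ker\Lf}_{C^0_f}$.

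Bijectivity is essentially bookkeeping. Surjectivity follows from the definition $B(f,C^0(M))=\Lf(C^0(M))=\bar\Lf\circ\pi(C^0(M))=\bar\Lf(C^0_f(M))$. For injectivity, if $\bar\Lf(\phi+\ker\Lf)=0$ then $\Lf(\phi)=0$, hence $\phi\in\ker\Lf$, so the class $\phi+\ker\Lf$ is the zero element of $C^0_f(M)$.

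The last step, which is the only non-formal ingredient, is to upgrade continuity and bijectivity of $\bar\Lf$ between Banach spaces to a Banach space isomorphism; this is immediate from the open mapping (equivalently, the bounded inverse) theorem, and is the whole reason for setting up the quotient and insisting that $B(f,C^0(M))$ be closed. There is no real obstacle here: the only place one must be careful is in checking that the two spaces are indeed Banach, which is exactly where the hypothesis of cohomological $C^0$-stability enters.
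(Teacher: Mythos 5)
Your proof is correct and follows essentially the same route as the paper: the identical continuity estimate with the infimum over $\psi\in\ker\Lf$, the tautological bijectivity, and the open mapping theorem applied between the two Banach spaces (with the closedness of $B(f,C^0(M))$ coming from the standing stability hypothesis). The only difference is that you spell out the bijectivity check and the completeness of the two spaces, which the paper leaves implicit.
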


\begin{proof}
  The continuity of $\bar\Lf$ easily follows from the following
  estimate: for any $\phi\in C^0(M)$ and every $\psi\in\ker\Lf$, it
  holds
  \begin{displaymath}
    \norm{\bar\Lf(\phi+\ker\Lf)}_{C^0}=\norm{\Lf(\phi+\psi)}_{C^0}\leq
    2\norm{\phi+\psi}_{C^0}.
  \end{displaymath}
  Taking infimum over $\psi\in\ker\Lf$ on the right hand side, we get
  \begin{displaymath}
    \norm{\bar\Lf(\phi+\ker\Lf)}_{C^0}\leq
    2\norm{\phi+\ker\Lf}_{C^0_f}, \quad\forall\phi\in C^0(M). 
  \end{displaymath}

  Finally, since $\bar\Lf$ is tautologically bijective, by the open
  mapping theorem, $\bar\Lf$ is a Banach space isomorphism.
\end{proof}

Now, in order to finish the proof of Theorem~A, let us assume $f$ is
cohomologically $C^0$-stable and it is not periodic. That means for
every $n\in\N$, we can find $x_n\in M$ such that $x_n\neq f^j(x_n)$,
for every $j\in\{1,\ldots,2^n\}$.

For each $n\geq 1$, let us choose $r_n>0$ such that the ball
$B_n:=B(x_n,r_n)$ satisfies
\begin{displaymath}
  f^j(B_n)\cap B_n=\emptyset,\quad\forall
  j\in\{1,\ldots, 2^n\}. 
\end{displaymath}

Then, consider the function $u_n\colon M\to\R$ given by
\begin{equation}
  \label{eq:u_n-def}
  u_n(x):=\sum_{j=-2^n+1}^{2^n-1}
  \chi_{f^j(B_n)}(x)\bigg(1-\frac{\abs{j}}{2^n}\bigg)
  \frac{r_n - d\big(f^{-j}(x),x_n\big)}{r_n}, 
  \quad\forall x\in M. 
\end{equation}
One can easily check that $u_n$ is continuous, its support is equal to
the disjoint union $\bigsqcup_{\abs{j}<2^n} f^j(B_n)$,
\begin{equation}
  \label{eq:u_n-inf}
  \inf_{x\in M} u_n(x)=u_n\Big(f^{2^n}(x_n)\Big)=0,
\end{equation}
and 
\begin{equation}
  \label{eq:u_n-sup}
  \sup_{x\in M} u_n(x)=u_n(x_n)=1.
\end{equation}

Since any function $v\in\ker\Lf$ must satisfy
$v(x_n)=v\big(f^{2^n}(x_n)\big)$, from \eqref{eq:u_n-inf} and
\eqref{eq:u_n-sup} we conclude that
\begin{equation}
  \label{eq:u_n-C0-quot-norm}
  \norm{u_n+\ker\Lf}_{C^0_f} \geq\frac{1}{2}.
\end{equation}

Now, consider the coboundary $\phi_n:=\Lf(u_n)=u_n\circ f-u_n\in
B(f,C^0(M))$. Thus, for every $x\in M$ it holds
\begin{equation}
  \label{eq:phi_n-formula}
  \begin{split}
    \phi_n(x)&=u_n(f(x))-u_n(x) \\
    & =\sum_{j=-2^n+1}^{2^n-1} \chi_{f^j(B_n)}(f(x))
    \bigg(1-\frac{\abs{j}}{2^n}\bigg)
    \frac{r_n - d\big(f^{-j+1}(x),x_n\big)}{r_n}  \\
    &\qquad\qquad\quad - \sum_{j=-n+1}^{n-1}\chi_{f^j(B_n)}(x)
    \bigg(1-\frac{\abs{j}}{2^n}\bigg)
    \frac{r_n-d(f^{-j}(x),x_n)}{r_n}\\
    &=\Bigg[\chi_{f^{-2^n}(B_n)}(x)
    \frac{r_n-d\big(f^{2^n}(x),x_n\big)}{2^nr_n} \\
    &\qquad+ \sum_{j=-2^n+2}^{2^n-2} \chi_{f^j(B_n)}(x)
    \left(\frac{\abs{j}}{2^n} -\frac{\abs{j+1}}{2^n}\right)
    \frac{r_n-d\big(f^{-j}(x),x_n\big)}{r_n}\\
    &\qquad\qquad\quad -\chi_{f^{2^n-1}(B_n)}(x)
    \frac{r_n-d\big(f^{-2^n+1}(x),x_n\big)}{2^nr_n}\Bigg]
  \end{split}
\end{equation}
In particular, \eqref{eq:phi_n-formula} implies that 
\begin{equation}
  \label{eq:phi_n-C0-norm}
  \norm{\phi_n}_{C^0}=\abs{\phi_n(f^j(x_n))}= \frac{1}{2^n},
  \quad\forall j\in\{-2^n,\ldots,2^n-1\}.
\end{equation}

Finally, recalling that $\Lf(u_n)=\phi_n$, for every $n\in\N$, from
\eqref{eq:u_n-C0-quot-norm} and \eqref{eq:phi_n-C0-norm} it follows
that $\Lf^{-1}\colon B(f,C^0(M))\to C^0_f(M)$ is not continuous,
contradicting Lemma~\ref{lem:induced-operat-cont}, and Theorem~A is
proved. 

\bibliographystyle{amsalpha} \bibliography{base-biblio}

\end{document}